\RequirePackage{ifpdf}
\ifpdf 
\documentclass[pdftex]{sigma}
\else
\documentclass{sigma}
\fi

\newtheorem{thm}{Theorem}[section]
\newtheorem{cor}[thm]{Corollary}
\newtheorem{lem}[thm]{Lemma}
\newtheorem{propo}[thm]{Proposition}

\numberwithin{equation}{section}

\begin{document}
\allowdisplaybreaks

\newcommand{\arXivNumber}{1704.01597}

\renewcommand{\thefootnote}{}

\renewcommand{\PaperNumber}{024}

\FirstPageHeading

\ShortArticleName{Fourier Series of Gegenbauer--Sobolev Polynomials}

\ArticleName{Fourier Series of Gegenbauer--Sobolev Polynomials\footnote{This paper is a~contribution to the Special Issue on Orthogonal Polynomials, Special Functions and Applications (OPSFA14). The full collection is available at \href{https://www.emis.de/journals/SIGMA/OPSFA2017.html}{https://www.emis.de/journals/SIGMA/OPSFA2017.html}}}

\Author{\'Oscar CIAURRI and Judit M\'INGUEZ}

\AuthorNameForHeading{\'O.~Ciaurri and J.~M\'{\i}nguez}

\Address{Departamento de Matem\'aticas y Computaci\'on, Universidad de La Rioja, 26006 Logro\~no, Spain}
\Email{\href{mailto:oscar.ciaurri@unirioja.es}{oscar.ciaurri@unirioja.es}, \href{mailto:judit.minguez@unirioja.es}{judit.minguez@unirioja.es}}

\ArticleDates{Received January 19, 2018, in final form March 13, 2018; Published online March 17, 2018}

\Abstract{We study the partial sum operator for a Sobolev-type inner product related to the classical Gegenbauer polynomials. A complete characterization of the partial sum operator in an appropriate Sobolev space is given. Moreover, we analyze the convergence of the partial sum operators.}

\Keywords{Sobolev-type inner product; Sobolev polynomials; Gegenbauer polynomials; par\-tial sum operator}

\Classification{42A20; 33C47}

\renewcommand{\thefootnote}{\arabic{footnote}}
\setcounter{footnote}{0}

\section{Introduction}

Let the Sobolev-type inner product be
\begin{gather}
\langle f,g \rangle_S=\int_{-1}^{1}f(x)g(x){\rm d}\mu_{\alpha}(x)+M(f(1)g(1)+f(-1)g(-1))\nonumber\\
\hphantom{\langle f,g \rangle_S=}{} +N(f'(1)g'(1)+f'(-1)g'(-1)),\label{inner-product}
\end{gather}
where $M\ge 0$, $N\ge 0$, and
\begin{gather*}
{\rm d}\mu_{\alpha}(x)=\frac{\Gamma(2\alpha+2)}{2^{2\alpha+1}\Gamma^2(\alpha+1)}\big(1-x^2\big)^{\alpha}{\rm d}x,\qquad \alpha>-1/2,
\end{gather*}
is the probability measure corresponding to the Gegenbauer polynomials.

Let $\{Q_n^{\alpha}(x)\}_{n\ge 0}$ be the sequence of normalized Gegenbauer--Sobolev orthonormal polynomials with respect to the inner product~\eqref{inner-product}. For each appropriate function~$f$, we define its sequence of Fourier--Gegenbauer--Sobolev coefficients by
\begin{gather*}
\widehat{f}(k)=\langle f, Q_k^{\alpha}\rangle_S, \qquad k=0,1,\dots,
\end{gather*}
and the $n$-th partial sum operator as
\begin{gather*}
G_nf(x)=\sum_{k=0}^n \widehat{f}(k) Q_k^{\alpha}(x), \qquad n=0,1,\dots.
\end{gather*}

Given $1< p<\infty$, we say that $f\in L^p({\rm d}\mu_{\alpha})$ if $f$ is a measurable function in~$[-1,1]$ and
\begin{gather*}
\|f\|_{L^p({\rm d}\mu_{\alpha})}=\left(\int_{-1}^1 |f(x)|^p {\rm d}\mu_{\alpha}(x)\right)^{1/p}<\infty.
\end{gather*}
Let us define the measure $\mu_{\alpha,M} := \mu_\alpha+M(\delta_1+\delta_{-1})$. We consider the space $W_p^{\alpha}$, with $1< p<\infty$, as the set of equivalence classes, with respect to the (semi)norm in $L^p(\mu_{\alpha,M})$, of measurable functions defined on $[-1,1]$ such that there exists an element in the class~$f$ for which~$f'(1)$ and~$f'(-1)$ are defined, and
\begin{gather*}
\|f\|_{W_p^{\alpha}}^p:=\|f\|_{L^p({\rm d}\mu_{\alpha})}^p+M\big(|f(1)|^p+|f(-1)|^p\big) +N\big(|f'(1)|^p+|f'(-1)|^p\big)<\infty.
\end{gather*}

The main target of this paper is the study of the uniform boundedness of the operators $G_n$. In fact, we will prove the following characterization.
\begin{thm}\label{main}
	Let $\alpha>-1/2$, $1<p<\infty$, and $f\in W_p^{\alpha}$. There exists a constant $C$, independent of $n$ and $f$, such that
	\begin{gather*}
	\|G_nf\|_{W_p^{\alpha}}\le C\|f\|_{W_p^{\alpha}}
	\end{gather*}
	if and only if
	\begin{gather}\label{acotacionp}
	\frac{4(\alpha+1)}{2\alpha+3}<p<\frac{4(\alpha+1)}{2\alpha+1}.
	\end{gather}
\end{thm}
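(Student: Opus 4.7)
The plan is to reduce the Sobolev partial sum $G_n$ to the classical Gegenbauer partial sum via an explicit change-of-basis formula, and then to invoke Pollard's weighted $L^p$ theorem for ultraspherical expansions.

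First, I would derive a finite-term formula relating the Sobolev-orthonormal basis $\{Q_n^{\alpha}\}$ to the classical Gegenbauer orthonormal basis, call it $\{p_n^{\alpha}\}$. Since the Sobolev inner product \eqref{inner-product} differs from the Gegenbauer one only by the four point functionals $f\mapsto f(\pm 1)$ and $f\mapsto f'(\pm 1)$, the Gram matrix of $\langle\cdot,\cdot\rangle_S$ in the $\{p_n^{\alpha}\}$-basis is a finite-rank perturbation of the identity, so $Q_n^{\alpha}$ can be written as $p_n^{\alpha}$ plus a bounded number of lower-order terms with coefficients computed from the Christoffel--Darboux kernel and its derivatives at $\pm 1$. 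Classical asymptotics for $p_n^{\alpha}(\pm 1)$ and $(p_n^{\alpha})'(\pm 1)$ then yield uniform-in-$n$ bounds for these connection coefficients and, via the same tools, for $Q_n^{\alpha}(\pm 1)$ and $(Q_n^{\alpha})'(\pm 1)$.

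Using this decomposition, one can write $G_n f = S_n^{\alpha}(Tf) + R_n f$, where $S_n^{\alpha}$ is the classical $n$-th Gegenbauer partial sum, $T$ is an $n$-independent bounded operator on $W_p^{\alpha}$ absorbing the change of basis, and $R_n$ is a finite-rank remainder whose range consists of a bounded number of terms of the form $\lambda(f)\, Q_k^{\alpha}$ for $k$ near $n$, with $\lambda$ a bounded linear functional on $W_p^{\alpha}$ built from the four point functionals. The sufficiency of \eqref{acotacionp} then splits in two parts: the main term is controlled by Pollard's theorem, which makes $S_n^{\alpha}$ uniformly bounded on $L^p({\rm d}\mu_{\alpha})$ precisely in the stated $p$-range, while the remainder is controlled by the uniform boundary estimates from the previous step. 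To bound the boundary part of $\|G_nf\|_{W_p^{\alpha}}$, I would evaluate the Christoffel--Darboux formula at the endpoints to express $G_nf(\pm 1)$ and $(G_nf)'(\pm 1)$ as pairings against explicit kernels whose $W_p^{\alpha}$-dual norms stay bounded uniformly in $n$ in the same $p$-range.

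For the necessity, testing $G_n$ on smooth functions compactly supported in $(-1,1)$ annihilates the boundary terms in the Sobolev norm and reduces the hypothesis to the uniform $L^p({\rm d}\mu_{\alpha})$-boundedness of $S_n^{\alpha}$, whence Pollard's converse forces \eqref{acotacionp}. The main obstacle is expected to be the derivative piece $(G_nf)'(\pm 1)$: estimating it requires sharp boundary asymptotics for $(Q_k^{\alpha})'(\pm 1)$ together with an Abel/summation-by-parts argument, and it is this step, sensitive to the precise growth exponents in the Gegenbauer derivatives at the endpoints, that explains why the admissible $p$-range for the Sobolev problem coincides with Pollard's classical range even though the Sobolev norm contains genuinely more information than $L^p({\rm d}\mu_{\alpha})$.
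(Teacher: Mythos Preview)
Your plan has the right shape, but two of its load-bearing steps would fail as written. First, the claim that a rank-four perturbation of the Gram matrix forces $Q_n^\alpha=p_n^\alpha+\text{(bounded number of lower $p_k^\alpha$)}$ is not valid: the Cholesky factor of $I+\text{(rank }4)$ is not banded in general, so Gram--Schmidt on a finite-rank perturbation gives a \emph{full} linear combination, not a short one. A short relation does exist here, but it is the specific Bavinck--Meijer identity
\[
Q_n^\alpha(x)=A_{n,4}(1-x^2)^2P_{n-4}^{\alpha+4}(x)+A_{n,2}(1-x^2)P_{n-2}^{\alpha+2}(x)+A_{n,0}P_n^\alpha(x),
\]
which involves Gegenbauer polynomials of \emph{shifted} parameter. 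Once you expand the kernel $L_n(x,y)$ through this, the nine resulting pieces $M_n^{j,m}$ are not controlled by Pollard alone: the cross terms $j\ne m$ mix two different Gegenbauer systems and carry nontrivial scalar multipliers $A_{k,j}A_{k,m}$, and the paper needs Muckenhoupt's weighted transplantation theorem and his Jacobi multiplier theorem, in addition to the weighted partial-sum bound, to handle them. Your operator $T$ hides precisely these ingredients, and its boundedness is not a consequence of Pollard.

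Second, neither your necessity argument nor the Christoffel--Darboux idea goes through. For $f\in C_c^\infty(-1,1)$ the boundary contributions to $\widehat f(k)$ vanish, but $G_nf=\sum_{k\le n}\bigl(\int fQ_k^\alpha\,d\mu_\alpha\bigr)Q_k^\alpha$ is still \emph{not} $S_n^\alpha f$ (the $Q_k^\alpha$ are not $L^2(d\mu_\alpha)$-orthonormal), so you cannot read off Pollard's converse; the paper instead obtains necessity from the Cohen-type lower bound of Fejzullahu--Marcell\'an for $\|Q_n^\alpha\|_{L^p(d\mu_\alpha)}$ outside the range~\eqref{acotacionp}. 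And there is no Christoffel--Darboux formula for these Sobolev polynomials---the paper explicitly flags this as the central obstacle---so the endpoint quantities $G_nf(\pm1)$, $(G_nf)'(\pm1)$ are handled not via a closed kernel identity but by termwise summation, using the sharp decay $|Q_k^\alpha(\pm1)|\approx k^{-\alpha-3/2}$ (when $M>0$) and $|(Q_k^\alpha)'(\pm1)|\approx k^{-\alpha-7/2}$ (when $N>0$), which already makes the relevant series absolutely convergent without any Abel or summation-by-parts argument.
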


The uniform boundedness of the partial sum operators for Gegenbauer polynomials in $L^p({\rm d}\mu_{\alpha})\!$ was given by Pollard~\cite{PollardII} who extended it to the Jacobi setting in \cite{PollardIII}. A general result including weights for Jacobi expansions can be seen in~\cite{muckenhoupt}. In~\cite{GPRV1}, by applying the boundedness with weights of the Hilbert transform, the authors did a complete study of the boundedness of the partial sum operators related to generalized Jacobi weights. The same authors studied the generalized Jacobi weights with mass points on the interval $[-1,1]$ (see~\cite{GPRV2}). The uniform boundedness with weights of the partial sum operator for the generalized Jacobi polynomials has been used to proved, using an idea dating back to J.~Marcinkiewicz, some results related to interpolating polynomials (see \cite{Xu-93,Xu-94} and the references in \cite{Nevai}).

It would be natural to consider our problem for the Jacobi weight instead of the Gegenbauer one. This extension requires some results about the corresponding Jacobi--Sobolev polyno\-mials that are unavailable in the literature at this moment. We hope to develop these tools in a~forthcoming paper to obtain a~complete characterization in that case as well.

As far as we know, a complete characterization of the uniform boundedness of the partial sums in the Sobolev setting is completely new. In~\cite{Marce-Xu}, the authors observed that the main obstacle to analyze this problem is the lack of Christoffel--Darboux formula for Sobolev orthogonal polynomials. As a consequence of this fact, except for certain particular cases, the convergence of Fourier expansions in Sobolev orthogonal polynomials has not been resolved. For example, the particular case of the Fourier series associated to the Jacobi--Sobolev polynomials defined by the inner product
\begin{gather*}
\int_{-1}^{1} f(x)g(x)(1-x)^\alpha(1+x)^\beta {\rm d}x+ \int_{-1}^{1} f'(x)g'(x)(1-x)^{\alpha+1}(1+x)^{\beta+1} {\rm d}x
\end{gather*}
was treated in \cite{MQU} but, unfortunately, the given results are not completely satisfactory.

Our proof of Theorem \ref{main} relies on some results about multipliers and transplantation ope\-ra\-tors for Jacobi expansions proved by Muckenhoupt and other authors in the eighties of the last century (see \cite{muckenhoupt1} and the references therein).

From a standard argument, the uniform boundedness of the operator $G_n$ will imply the convergence for functions in the class $W_p^\alpha$ if the polynomials form a dense class. However, the reverse implication is not true because the space $W_p^{\alpha}$ is not complete. The density of the polynomials is contained in the next result.

\begin{thm}\label{density}
The set of polynomials is dense in the space $W_p^{\alpha}$. That is, given $f\in W_p^{\alpha}$, for all $\varepsilon>0$ there exists a polynomial $q_n$ of degree $n$ such that
\begin{gather*}
\|f-q_n\|_{W_p^{\alpha}}<\varepsilon.
\end{gather*}
\end{thm}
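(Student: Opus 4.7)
The plan is to reduce to the case where $f$ has vanishing boundary data by subtracting a fixed cubic Hermite interpolating polynomial, and then to approximate the remainder in $L^p({\rm d}\mu_\alpha)$ by polynomials whose endpoint values and derivatives are subsequently corrected with a small cubic Hermite-basis term. Concretely, first I would pick a representative of $f\in W_p^\alpha$ for which $f(\pm 1)$ and $f'(\pm 1)$ are defined, and let $h$ be the unique cubic polynomial with $h(\pm 1)=f(\pm 1)$ and $h'(\pm 1)=f'(\pm 1)$. Setting $g:=f-h$, one has $g(\pm 1)=g'(\pm 1)=0$, and therefore $\|g\|_{W_p^\alpha}=\|g\|_{L^p({\rm d}\mu_\alpha)}$. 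Since $h$ is itself a polynomial, it suffices to produce, for each $\varepsilon>0$, a polynomial $q$ with $q(\pm 1)=q'(\pm 1)=0$ and $\|g-q\|_{L^p({\rm d}\mu_\alpha)}<\varepsilon$; the approximation of $f$ is then $q+h$.

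Next I would approximate $g$ by a polynomial in two stages. Because ${\rm d}\mu_\alpha$ is a finite Borel measure on $[-1,1]$ with continuous density and no atoms at $\pm 1$, $C_c^\infty((-1,1))$ is dense in $L^p({\rm d}\mu_\alpha)$, so I pick $\tilde g\in C_c^\infty((-1,1))$ with $\|g-\tilde g\|_{L^p({\rm d}\mu_\alpha)}<\varepsilon/2$; automatically $\tilde g(\pm 1)=\tilde g'(\pm 1)=0$. Then, approximating $\tilde g'$ uniformly on $[-1,1]$ by a polynomial $r_n$ (Weierstrass) and setting $p_n(x):=\tilde g(-1)+\int_{-1}^{x}r_n(t)\,{\rm d}t$, one obtains polynomials $p_n$ with $p_n'\to\tilde g'$ and $p_n\to\tilde g$ uniformly on $[-1,1]$; in particular $p_n(\pm 1),\,p_n'(\pm 1)\to 0$.

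Finally I would fix the boundary data. Let $\phi_+,\phi_-,\psi_+,\psi_-$ be the standard cubic Hermite basis at $\pm 1$ (so $\phi_\pm(\pm 1)=1$, $\psi_\pm'(\pm 1)=1$, with all other boundary values and first derivatives at $\pm 1$ equal to zero), and define
\[
q_n(x):=p_n(x)-p_n(1)\phi_+(x)-p_n(-1)\phi_-(x)-p_n'(1)\psi_+(x)-p_n'(-1)\psi_-(x).
\]
Then $q_n$ is a polynomial with $q_n(\pm 1)=q_n'(\pm 1)=0$, and since the four correction coefficients tend to $0$ while the basis is fixed, $q_n\to\tilde g$ uniformly on $[-1,1]$, hence in $L^p({\rm d}\mu_\alpha)$. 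Combining with the previous step gives $\|g-q_n\|_{L^p({\rm d}\mu_\alpha)}<\varepsilon$ for $n$ large; as both $g$ and $q_n$ carry vanishing boundary data, the remaining terms in $\|g-q_n\|_{W_p^\alpha}$ are zero, so $q_n+h$ is the required polynomial approximation of $f$. The one subtle point in this plan is that the intermediate approximation of $\tilde g$ must be simultaneous in $\tilde g$ and $\tilde g'$ (hence the Weierstrass-on-derivative-and-integrate trick), so that the endpoint correction used to enforce $q_n(\pm 1)=q_n'(\pm 1)=0$ remains negligible; no deeper obstruction appears.
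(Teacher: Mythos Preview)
Your argument is correct and shares the same skeleton as the paper's: approximate in $L^p(\mathrm{d}\mu_\alpha)$ by a smooth function, then pass to a polynomial via simultaneous $C^0$/$C^1$ Weierstrass approximation so that the endpoint terms are controlled. The difference is purely in how the boundary data are matched. The paper first picks a smooth $g$ close to $f$ in $L^p(\mathrm{d}\mu_\alpha)$, then adds a smooth bump $h$ with small $L^p$ norm satisfying $h(\pm 1)=f(\pm 1)-g(\pm 1)$ and $h'(\pm 1)=f'(\pm 1)-g'(\pm 1)$, so that $g+h$ already carries the exact boundary data of $f$; a single $C^1$-close polynomial to $g+h$ then finishes. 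You instead strip off the boundary data of $f$ with a cubic Hermite interpolant up front, approximate the remainder, and then apply a second Hermite correction on the polynomial side to enforce vanishing endpoint data. Your version has the mild advantage that all correction functions are explicit polynomials (so the final approximant is visibly polynomial at every stage), while the paper's bump-function device avoids the second correction step; the underlying ingredients are identical.
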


Now, from Theorems \ref{main} and \ref{density}, we deduce the convergence for functions in the class $W_p^{\alpha}$ of the partial sums $G_n$.

\begin{cor}Let $f\in W_p^{\alpha}$ with $\alpha>-1/2$ and $1<p<\infty$. If
	\begin{equation*}
	\frac{4(\alpha+1)}{2\alpha+3}<p<\frac{4(\alpha+1)}{2\alpha+1},
	\end{equation*}
then
\begin{gather*}
 \lim_{n\to\infty}\|G_nf-f\|_{W_p^{\alpha}}=0.
\end{gather*}
\end{cor}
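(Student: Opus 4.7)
The plan is to run the classical Banach--Steinhaus style approximation argument: combine the uniform operator bound from Theorem~\ref{main} with the density result of Theorem~\ref{density}, exploiting the projection property of $G_n$ on polynomials.

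First I would record the basic algebraic fact that $G_n$ acts as the identity on polynomials of degree at most $n$. This follows because $\{Q_k^{\alpha}\}_{k\ge 0}$ is orthonormal with respect to $\langle\cdot,\cdot\rangle_S$, so if $q$ is a polynomial of degree $m\le n$ then $q=\sum_{k=0}^{m}\langle q, Q_k^{\alpha}\rangle_S\, Q_k^{\alpha}$, which coincides with $G_nq$. In particular, $G_n q = q$ for all $n\ge m$.

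Next, fix $f\in W_p^{\alpha}$ with $p$ in the range \eqref{acotacionp} and let $\varepsilon>0$. By Theorem~\ref{density}, there exists a polynomial $q_m$ of some degree $m$ such that $\|f-q_m\|_{W_p^{\alpha}}<\varepsilon$. For every $n\ge m$, I would write
\begin{gather*}
G_nf-f = G_n(f-q_m) + (q_m - f),
\end{gather*}
using $G_nq_m=q_m$. Applying the triangle inequality and Theorem~\ref{main} to the first term,
\begin{gather*}
\|G_nf-f\|_{W_p^{\alpha}} \le \|G_n(f-q_m)\|_{W_p^{\alpha}} + \|f-q_m\|_{W_p^{\alpha}} \le (C+1)\|f-q_m\|_{W_p^{\alpha}} < (C+1)\varepsilon,
\end{gather*}
where $C$ is the constant from Theorem~\ref{main}, independent of $n$. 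Since $\varepsilon$ was arbitrary, this gives $\lim_{n\to\infty}\|G_nf-f\|_{W_p^{\alpha}}=0$.

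There is no real obstacle here once Theorems~\ref{main} and~\ref{density} are in hand; the only point that deserves a sentence of justification is the identity $G_nq_m=q_m$ for $n\ge\deg q_m$, and this is immediate from the orthonormality of the Gegenbauer--Sobolev basis under $\langle\cdot,\cdot\rangle_S$. Completeness of $W_p^{\alpha}$ plays no role, which is consistent with the paper's remark that uniform boundedness of $G_n$ together with density of polynomials is sufficient (but, in this setting, not necessary) for convergence.
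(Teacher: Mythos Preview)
Your argument is correct and is exactly the ``standard argument'' the paper alludes to just before stating the corollary; the paper itself gives no further proof beyond noting that Theorems~\ref{main} and~\ref{density} together yield the result. Your explicit verification that $G_nq_m=q_m$ for $n\ge m$ and the $(C+1)\varepsilon$ estimate simply spell out what the paper leaves implicit.
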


In Section~\ref{section2} we present the necessary definitions and results concerning to the Gegenbauer and Gegenbauer--Sobolev polynomials. Section~\ref{section3} and Section~\ref{section4} are devoted to prove Theorem~\ref{main} and Theorem~\ref{density}, respectively.

\section{Definitions and auxiliary results}\label{section2}

Let $\{R_n^{\alpha}\}_{n\ge 0}$ be the sequence of Gegenbauer polynomials given by the Rodrigues formula
\begin{gather*}
R_n^{\alpha}(x)=\frac{(-1)^n}{\Gamma(\alpha+1)}{2^n\Gamma(n+\alpha+1)}\big(1-x^2\big)^{-\alpha} \frac{{\rm d}^n}{{\rm d}x^n}\big(\big(1-x^2\big)^{n+\alpha}\big).
\end{gather*}
If we call $\{B_n^{\alpha}\}_{n_\ge 0}$ the sequence of orthogonal polynomials with respect to~\eqref{inner-product}, the following relation between $R_n^{\alpha}$ and $B_n^{\alpha}$ was proved in \cite{bavinck}
\begin{gather}
B_n^{\alpha}(x)=\frac{a_n(n+2\alpha+1)_4(-n)_4}{2^6(\alpha+2)(\alpha+3)(\alpha+1)_4}\big(1-x^2\big)^2R_{n-4}^{\alpha+4}(x)\nonumber\\
\hphantom{B_n^{\alpha}(x)=}{} +\frac{b_n(n+2\alpha+1)_2(-n)_2}{2^2(\alpha+1)_2}\big(1-x^2\big)R_{n-2}^{\alpha+2}(x)+c_nR_n^{\alpha}(x),\label{rel-ortogonal}
\end{gather}
where $(a)_n$ is the shifted factorial (or Pochhammer symbol), defined by $(a)_n=\frac{\Gamma(a+n)}{\Gamma(a)}$, and
\begin{gather*}
a_n=MN\frac{4(2\alpha+3)_n(2\alpha+3)_{n-2}}{(\alpha+1)(\alpha+2)n!(n-2)!}+N\frac{2(2\alpha+3)_{n-1}}{(\alpha+1)(n-1)!},\\
b_n =-\frac{N}{2}\frac{(2\alpha+3)_{n-1}(n-2)(n+2\alpha+3)}{(\alpha+1)(\alpha+3)(n-1)!}-2M\frac{(2\alpha+3)_{n-2}}{n!},\\
c_n=1-\frac{N}{2}\frac{(2\alpha+3)_{n+1}}{(\alpha+1)(\alpha+2)(\alpha+3)(n-3)!}.
\end{gather*}
Here and elsewhere we use the convention that $R_n^{\gamma}\equiv 0$ if $n<0$.

In \cite{foulquie} it was proved the identity
\begin{gather*}
\|B_n^\alpha\|_{W_2^\alpha}^2=2Mc_n^2+2N\left(\frac{n(n+2\alpha+1)}{2(\alpha+1)}+
\frac{M(2\alpha+3)_n}{(\alpha+1)(\alpha+2)(n-1)!}\right)^2\\
\hphantom{\|B_n^\alpha\|_{W_2^\alpha}^2=}{}
+\frac{\Gamma(2\alpha+2) n!}{(2n+2\alpha+1) \Gamma(n+2\alpha+1)}\left(\frac{n(n-1)(n-2)(n-3)(n+2\alpha+1)_4 a_n^2}{16(\alpha+2)^2(\alpha+3)^2}\right.\\
\hphantom{\|B_n^\alpha\|_{W_2^\alpha}^2=}{} \times n(n-1)(n+2\alpha+1)_2 b_n^2-
\frac{n(n-1)(n-2)(n-3)(n+2\alpha+1)_2 a_n b_n}{2(\alpha+2)_2}\\\left.
\hphantom{\|B_n^\alpha\|_{W_2^\alpha}^2=}{} +c_n^2
-2n(n-1)b_nc_n+\frac{n(n-1)(n-2)(n-3)a_nc_n}{2(\alpha+2)(\alpha+3)}\right).
\end{gather*}
Then $Q_n^{\alpha}(x)=\lambda_{n,\alpha} B_n^{\alpha}(x)$, where $\lambda_{n,\alpha}^{-2}=\|B_n^\alpha\|_{W_2^\alpha}^2$. Now, denoting by $\{P_n^{\alpha}\}_{n\ge 0}$ the sequence of orthonormal Gegenbauer polynomials, given by $P_n^\alpha=\beta_{n,\alpha} R_n^\alpha$ with
\begin{gather*}
\beta_{n,\alpha}^{-2}=\|R_n^{\alpha}\|_{L^2({\rm d}\mu_{\alpha})}^2=\frac{\Gamma(2\alpha+2)n!}{(2n+2\alpha+1)\Gamma(n+2\alpha+1)},
\end{gather*}
from \eqref{rel-ortogonal} we can write
\begin{gather}\label{rel-ortonormal}
Q_n^{\alpha}(x)=A_{n,4}\big(1-x^2\big)^2P_{n-4}^{\alpha+4}(x)+A_{n,2}\big(1-x^2\big)P_{n-2}^{\alpha+2}(x)+A_{n,0}P_n^{\alpha}(x),
\end{gather}
where
\begin{gather*}
A_{n,4}=\frac{a_n(n+2\alpha+1)_4(-n)_4}{2^6(\alpha+2)(\alpha+3)(\alpha+1)_4}\frac{\lambda_{n,\alpha}}{\beta_{n-4,\alpha+4}}, \qquad
A_{n,2}=\frac{b_n(n+2\alpha+1)_2(-n)_2}{2^2(\alpha+1)_2}\frac{\lambda_{n,\alpha}}{\beta_{n-2,\alpha+2}},
\end{gather*}
and $A_{n,0}=c_n\frac{\lambda_{n,\alpha}}{\beta_{n,\alpha}}$.

We consider the notations
\begin{gather*}
g(n,J)=\sum_{j=0}^{J-1}\frac{d_j}{(n+1)^{j}}+O\left(\frac{1}{(n+1)^J}\right), \qquad J\in \mathbb{N},\\
h(n,\alpha)=\sum_{j=0}^{\lfloor 2\alpha+2 \rfloor}\frac{D_j}{(n+1)^{j}}+O\left(\frac{1}{(n+1)^{2\alpha+2}}\right), \qquad \alpha>-1/2,
\end{gather*}
for some constants $d_j$ and $D_j$ that will be different in each occurrence of the $g(n,J)$ and $h(n,\alpha)$, respectively. With the previous notation, by using that
\begin{gather}\label{ec:gamma-ratio}
\frac{\Gamma(n+a)}{\Gamma(n+b)}=n^{a-b}g(n,J),
\end{gather}
for any $J\in \mathbb{N}$, we deduce in an easy way that
\begin{gather}\label{lambda_n}
\lambda_{n,\alpha}= \begin{cases}
(n+1)^{-\alpha-\frac{11}{2}}g(n,J), & M=0, \ N>0,\\
(n+1)^{-3\alpha-\frac{15}{2}}h(n,\alpha), & M>0, \ N>0,\\
(n+1)^{-\alpha-\frac{3}{2}}h(n,\alpha), & M>0, \ N=0,
\end{cases}
\end{gather}
for any $J\in \mathbb{N}$.

\begin{lem}\label{A_nB_nC_n} Let $\alpha>-1/2$. Then the constants $A_{n,4}$, $A_{n,2}$, and $A_{n,0}$ in~\eqref{rel-ortonormal} satisfy the following:
	\begin{itemize}\itemsep=0pt
		\item [$i)$] If $M=0$ and $N>0$,
		\begin{gather*}
		A_{n,4}=g(n,J),\qquad A_{n,2}= g(n,J),\qquad A_{n,0}= g(n,J),
		\end{gather*}
for any $J\in \mathbb{N}$.
		\item [$ii)$] If $M>0$ and $N>0$,
		\begin{gather*}
		A_{n,4}= h(n,\alpha),\qquad A_{n,2}=\frac{h(n,\alpha)}{(n+1)^{2\alpha+2}},\qquad A_{n,0}=\frac{h(n,\alpha)}{(n+1)^{2\alpha+2}}.
		\end{gather*}
		\item [$iii)$] If $M>0$ and $N=0$,
		\begin{gather*}
		A_{n,4}=0,\qquad A_{n,2}=h(n,\alpha),\qquad A_{n,0}=\frac{h(n,\alpha)}{(n+1)^{2\alpha+2}}.
		\end{gather*}
	\end{itemize}
\end{lem}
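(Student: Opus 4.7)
The proof is a careful bookkeeping exercise with \eqref{ec:gamma-ratio}. My plan is to first turn every Pochhammer symbol and factorial ratio appearing in $a_n$, $b_n$, $c_n$ (and in $\beta_{n,\alpha}$, $\beta_{n-2,\alpha+2}$, $\beta_{n-4,\alpha+4}$) into the form $n^{\sigma}g(n,J)$. For instance, $(2\alpha+3)_{n-1}/(n-1)!=\Gamma(n+2\alpha+2)/[\Gamma(2\alpha+3)\Gamma(n)]=n^{2\alpha+2}g(n,J)$, $(2\alpha+3)_n/n!=n^{2\alpha+2}g(n,J)$, and $(2\alpha+3)_{n+1}/(n-3)!=n^{2\alpha+6}g(n,J)$. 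The polynomial pieces $(n+2\alpha+1)_r(-n)_r$ and the factors $n(n-1)\cdots$ that multiply $a_n,b_n,c_n$ in the norm formula are already polynomials in $n$, and $\beta_{n,\alpha}^{-2}=n^{-2\alpha-1}g(n,J)$, hence $\beta_{n,\alpha}=n^{\alpha+1/2}g(n,J)$ (after taking a square root of a positive asymptotic series).

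With these expansions I read off, in each case, the order of $a_n$, $b_n$, $c_n$. In case~(i) ($M=0$, $N>0$) one has $a_n=n^{2\alpha+2}g(n,J)$, $b_n=n^{2\alpha+4}g(n,J)$, $c_n=-C\,n^{2\alpha+6}g(n,J)$ (the constant $1$ is absorbed into $g$ because $2\alpha+6$ is shifted by integer powers only). In case~(iii) ($N=0$) one has $a_n=0$, $b_n=-2M\,n^{2\alpha}h(n,\alpha)/n^{0}$ arising from a single term, and $c_n\equiv1$. In case~(ii) ($M>0$, $N>0$), the presence of both $M$ and $N$ forces the additive terms inside $a_n,b_n,c_n$ to have different polynomial rates of growth: e.g., $a_n=MN\cdot C\,n^{4\alpha+4}g(n,J)+N\cdot C'\,n^{2\alpha+2}g(n,J)$. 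Factoring the leading rate gives an asymptotic expansion of the form $n^{4\alpha+4}[\,C_1+C_2/n^{2\alpha+2}+\cdots]$, which is precisely the shape encoded by $h(n,\alpha)$ (integer-power expansion up to order $\lfloor 2\alpha+2\rfloor$ plus an $O(1/(n+1)^{2\alpha+2})$ remainder). The same phenomenon determines the $h$-form of $b_n$ and $c_n$ in this case. Substituting these asymptotics into $\|B_n^\alpha\|_{W_2^\alpha}^2$ and identifying the dominant term yields \eqref{lambda_n} in each of the three cases.

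With $\lambda_{n,\alpha}$ and the $\beta$'s in hand, the three quantities $A_{n,0},A_{n,2},A_{n,4}$ are products (and quotients) of things of the form $n^\sigma g(n,J)$ or $n^\sigma h(n,\alpha)$. The exponent $\sigma$ of $n$ cancels exactly in each case thanks to the choices in \eqref{lambda_n}: one checks by arithmetic that in case (i) all three exponents are $0$; in case (ii) the exponent of $A_{n,4}$ is $0$ while those of $A_{n,2}$ and $A_{n,0}$ are $-(2\alpha+2)$; in case (iii) the exponent of $A_{n,2}$ is $0$ and that of $A_{n,0}$ is $-(2\alpha+2)$, while $A_{n,4}=0$ identically from $a_n=0$. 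The remaining bookkeeping is to notice that $g(n,J)\cdot g(n,J)=g(n,J)$ and $g(n,J)\cdot h(n,\alpha)=h(n,\alpha)$ (with different constants), so the asymptotic shape propagates correctly to the statement of the lemma.

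The only subtle step is the third paragraph of the argument: when passing from a sum of terms of different polynomial orders (as happens for $a_n$, $b_n$ and $c_n$ as soon as $M>0$) to a single asymptotic representation, one must be careful that the cross-interference between integer-power expansions shifted by a non-integer amount $2\alpha+2$ is exactly the $h(n,\alpha)$ shape; once this is granted the rest is routine. In particular, the verification of \eqref{lambda_n} in case~(ii) --- where one must identify which of the several large terms in $\|B_n^\alpha\|_{W_2^\alpha}^2$ actually dominates --- is the one spot where a slip could go unnoticed, so I would carry that computation out explicitly before assembling the $A_{n,k}$.
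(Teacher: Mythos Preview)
Your proposal is correct and follows the same route as the paper: expand $a_n$, $b_n$, $c_n$, and $\beta_{n,\alpha}$ via \eqref{ec:gamma-ratio}, combine with \eqref{lambda_n}, and read off the exponents in the products defining $A_{n,j}$. The paper is terser only because it treats \eqref{lambda_n} as already established (it is stated immediately before the lemma) rather than re-deriving it inside the proof; one small slip in your write-up is that in case~(iii) the coefficient $b_n$ itself is of the form $n^{2\alpha}g(n,J)$ (a single gamma ratio, since $N=0$), and the $h(n,\alpha)$ shape of $A_{n,2}$ enters only through $\lambda_{n,\alpha}$, not through $b_n$.
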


\begin{proof} From \eqref{ec:gamma-ratio} we have
\begin{gather*}
a_{n}= \begin{cases}
(n+1)^{2\alpha+2}g(n,J), & M=0, \ N>0,\\
(n+1)^{4\alpha+4}h(n,\alpha), & M>0, \ N>0,\\
0, & M>0, \ N=0,
\end{cases} \\
b_{n}= \begin{cases}
(n+1)^{2\alpha+4}g(n,J), & M=0, \ N>0,\\
(n+1)^{2\alpha+4}g(n,J), & M>0, \ N>0,\\
(n+1)^{2\alpha}g(n,J), & M>0, \ N=0,
\end{cases} \\
c_{n}= \begin{cases}
(n+1)^{2\alpha+6}g(n,J), & M=0, \ N>0,\\
(n+1)^{2\alpha+6}g(n,J), & M>0, \ N>0,\\
1, & M>0, \ N=0,
\end{cases}
\end{gather*}
and $\beta_{n,\alpha}=(n+1)^{\alpha+1/2}g(n,J)$, for any $j\in \mathbb{N}$. Then, using that \eqref{lambda_n}, the result follows.
\end{proof}

The following results, that we will use in the proof of Theorem~\ref{main}, can be found in~\cite{foulquie}. The notation appearing in Lemma~\ref{extremos}, $f(n)\approx g(n)$, indicates the existence of positive constants~$C$ and $D$ such that $Cf(n)\le g(n)\le Df(n)$ for $n$ large enough.

\begin{lem}\label{acotacionQ}
	Let $\{Q_n^{\alpha}\}_n$ be the sequence of orthonormal polynomials with respect to the inner product \eqref{inner-product}, then
	\begin{gather*}
	\max_{-1\le x\le 1}\big(1-x^2\big)^{\frac{\alpha}{2}+\frac{1}{4}}|Q_n^{\alpha}(x)|\le C.
	\end{gather*}
\end{lem}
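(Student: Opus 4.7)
The plan is to read off the bound directly from the decomposition \eqref{rel-ortonormal} together with Lemma~\ref{A_nB_nC_n} and the classical Szegő-type estimate for the orthonormal Gegenbauer polynomials. Recall that for $\gamma>-1/2$ one has the uniform bound
\begin{gather*}
\max_{-1\le x\le 1}\big(1-x^2\big)^{\gamma/2+1/4}|P_n^{\gamma}(x)|\le C,
\end{gather*}
which is just the symmetric-Jacobi specialization of the standard estimate for orthonormal Jacobi polynomials and may be quoted without proof.

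Applying this to the three pieces that appear in \eqref{rel-ortonormal}, one gets
\begin{gather*}
\big(1-x^2\big)^{(\alpha+4)/2+1/4}|P_{n-4}^{\alpha+4}(x)|\le C,\qquad \big(1-x^2\big)^{(\alpha+2)/2+1/4}|P_{n-2}^{\alpha+2}(x)|\le C,
\end{gather*}
and so, rearranging powers of $(1-x^2)$,
\begin{gather*}
\big(1-x^2\big)^{\alpha/2+1/4}\big(1-x^2\big)^{2}|P_{n-4}^{\alpha+4}(x)|\le C,\qquad \big(1-x^2\big)^{\alpha/2+1/4}\big(1-x^2\big)|P_{n-2}^{\alpha+2}(x)|\le C,
\end{gather*}
and $\big(1-x^2\big)^{\alpha/2+1/4}|P_n^{\alpha}(x)|\le C$. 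The two extra factors $(1-x^2)^2$ and $(1-x^2)$ absorbed by the shifted polynomials are exactly the ones that multiply them in \eqref{rel-ortonormal}.

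Next I would invoke Lemma~\ref{A_nB_nC_n} to conclude that in each of the three cases (the choices of $M$ and $N$), the coefficients $A_{n,4}$, $A_{n,2}$, $A_{n,0}$ are uniformly bounded in $n$: when $M=0$ and $N>0$ they are of the form $g(n,J)$; when $M>0$ and $N>0$ one coefficient is $h(n,\alpha)$ and the other two carry an extra decaying factor $(n+1)^{-2\alpha-2}$; and when $M>0$ and $N=0$ one has $A_{n,4}=0$ while the remaining coefficients are again bounded. In every case $|A_{n,4}|+|A_{n,2}|+|A_{n,0}|\le C$.

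Combining these two ingredients with the triangle inequality applied to \eqref{rel-ortonormal} yields
\begin{gather*}
\big(1-x^2\big)^{\alpha/2+1/4}|Q_n^{\alpha}(x)|\le C\big(|A_{n,4}|+|A_{n,2}|+|A_{n,0}|\big)\le C,
\end{gather*}
which is the claim. There is no real obstacle here beyond the bookkeeping of cases in Lemma~\ref{A_nB_nC_n}; the only mild check is that the two extra powers of $(1-x^2)$ entering the shifted summands in \eqref{rel-ortonormal} match precisely the shifts in the Gegenbauer parameter, so that the weight $(1-x^2)^{\alpha/2+1/4}$ is the correct common weight to factor out.
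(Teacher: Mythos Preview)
Your argument is correct. The decomposition \eqref{rel-ortonormal}, the Szeg\H{o}-type bound $\big(1-x^2\big)^{\gamma/2+1/4}|P_n^{\gamma}(x)|\le C$ (which the paper itself quotes later from \cite[Theorem~7.32.2]{Szego}), and the boundedness of the $A_{n,j}$ from Lemma~\ref{A_nB_nC_n} combine exactly as you say. The check that the extra factors $(1-x^2)^{2}$ and $(1-x^2)$ match the parameter shifts $\alpha\to\alpha+4$ and $\alpha\to\alpha+2$ is immediate, and since $\alpha>-1/2$ the weight $(1-x^2)^{\alpha/2+1/4}$ vanishes at the endpoints, so there is no issue there either.

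As for the comparison: the paper does not prove Lemma~\ref{acotacionQ} at all; it simply cites it from \cite{foulquie}. Your proof is therefore not a reproduction of the paper's argument but an independent, self-contained derivation that uses only ingredients already assembled in Section~\ref{section2}. This is a genuine gain: it shows the lemma is an immediate byproduct of \eqref{rel-ortonormal} and Lemma~\ref{A_nB_nC_n}, so no external reference is needed. The price is that you rely on Lemma~\ref{A_nB_nC_n}, whose proof in turn depends on the explicit asymptotics of $\lambda_{n,\alpha}$ extracted from the norm formula of \cite{foulquie}; so the dependence on \cite{foulquie} is not fully removed, only shifted to a more elementary computation.
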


\begin{lem}\label{extremos} Let $\{Q_n^{\alpha}\}_n$ be the sequence of orthonormal polynomials with respect to the inner product~\eqref{inner-product}, then
\begin{gather*}
|Q_n^{\alpha}(1)|=|Q_n^{\alpha}(-1)|\approx \begin{cases}
(n+1)^{-\alpha-3/2}, & M>0, \ N\ge 0,\\
(n+1)^{\alpha+1/2}, & M=0, \ N>0,
\end{cases}\\
|(Q_n^{\alpha})'(1)|=|(Q_n^{\alpha})'(-1)|\approx \begin{cases}
(n+1)^{-\alpha-7/2}, & M\ge 0, \ N> 0,\\
(n+1)^{\alpha+5/2}, & M>0, \ N=0.
\end{cases}
\end{gather*}
\end{lem}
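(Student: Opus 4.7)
The plan is to extract both asymptotics from the decomposition \eqref{rel-ortonormal} together with Lemma~\ref{A_nB_nC_n} and the standard endpoint estimates for the orthonormal Gegenbauer polynomials. Substituting $x=\pm 1$ into \eqref{rel-ortonormal} annihilates the first two summands because they carry the factors $(1-x^2)^2$ and $(1-x^2)$, so that
\[
Q_n^\alpha(\pm 1) = A_{n,0}\, P_n^\alpha(\pm 1).
\]
The classical fact $|P_n^\alpha(\pm 1)| \approx (n+1)^{\alpha+1/2}$, which follows from the closed form for $R_n^\alpha(1)$ as a ratio of Gamma functions combined with $\beta_{n,\alpha}\approx (n+1)^{\alpha+1/2}$, then yields $|Q_n^\alpha(\pm 1)|$ directly from the asymptotics of $A_{n,0}$ in Lemma~\ref{A_nB_nC_n}, proving the first half of the statement.

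For the endpoint derivatives, I would differentiate \eqref{rel-ortonormal} and evaluate at $\pm 1$. The $A_{n,4}$ contribution vanishes because both $(1-x^2)^2$ and its derivative $-4x(1-x^2)$ are zero at $\pm 1$, while the derivative of $(1-x^2)$ in the $A_{n,2}$ term survives and the $A_{n,0}$ term is differentiated in the usual way. This gives
\[
(Q_n^\alpha)'(\pm 1) = \mp 2 A_{n,2}\, P_{n-2}^{\alpha+2}(\pm 1) + A_{n,0}\, (P_n^\alpha)'(\pm 1).
\]
Both $P_{n-2}^{\alpha+2}(\pm 1)$ and $(P_n^\alpha)'(\pm 1)$ are of size $(n+1)^{\alpha+5/2}$ (differentiation raises the Gegenbauer parameter by one and multiplies by a factor of order~$n$). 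In the easy case $M>0$, $N=0$, Lemma~\ref{A_nB_nC_n} gives $A_{n,4}=0$, $A_{n,2}\approx 1$, and $A_{n,0}\approx (n+1)^{-2\alpha-2}$, so the first term dominates and produces the claimed $(n+1)^{\alpha+5/2}$.

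The case $N>0$ is the main obstacle. Here Lemma~\ref{A_nB_nC_n} shows that the two summands in the derivative formula are of identical order of magnitude, namely $(n+1)^{\alpha+5/2}$ when $M=0$ and $(n+1)^{-\alpha+1/2}$ when $M>0$, whereas the lemma claims the much smaller value $(n+1)^{-\alpha-7/2}$; a very substantial cancellation of order $(n+1)^{2\alpha+6}$ or $(n+1)^{4}$, respectively, is therefore required. My plan is to expand $A_{n,2}$ and $A_{n,0}$ to sufficient accuracy using the explicit formulas for $a_n$, $b_n$, $c_n$, $\lambda_{n,\alpha}$, and the $\beta$-constants, and to do the same for $P_{n-2}^{\alpha+2}(1)$ and $(P_n^\alpha)'(1)$ using the closed-form Gamma-function expressions for Gegenbauer polynomials and their first derivatives at the endpoints. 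The identity for $(Q_n^\alpha)'(\pm 1)$ then reduces to an algebraic simplification of a difference of two products of Gamma ratios, in which the leading orders cancel and leave a residual of exactly the claimed size. The conceptual reason the cancellation must occur is that $A_{n,2}$ and $A_{n,0}$ are defined precisely to make $Q_n^\alpha$ Sobolev orthonormal, and the presence of the $N$ weight in the inner product forces $(Q_n^\alpha)'(\pm 1)$ to be small whenever $N>0$.
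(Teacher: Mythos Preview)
The paper does not give its own proof of this lemma: immediately before Lemma~\ref{acotacionQ} it states that these two results ``can be found in~\cite{foulquie}''. So there is no in-paper argument to compare against, and your proposal is an attempt to reprove a cited result.

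For the values $Q_n^\alpha(\pm 1)$ your argument is correct and short; you only need to add that the leading coefficients $d_0$ (resp.~$D_0$) in the $g(n,J)$ (resp.~$h(n,\alpha)$) describing $A_{n,0}$ are nonzero, so that the $\approx$ lower bound holds and not just the $O$ upper bound. The case $M>0$, $N=0$ for the derivative is also fine.

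The only genuine issue is the derivative case with $N>0$. Your diagnosis is right: the two surviving terms in $(Q_n^\alpha)'(\pm1)$ are individually of order $(n+1)^{\alpha+5/2}$ (or $(n+1)^{-\alpha+1/2}$) and must cancel down to $(n+1)^{-\alpha-7/2}$, a drop of $(n+1)^{2\alpha+6}$ powers. But the phrase ``expand $A_{n,2}$ and $A_{n,0}$ to sufficient accuracy'' is dangerous: an asymptotic expansion in powers of $1/(n+1)$ cannot detect a cancellation of order $2\alpha+6$, since $\alpha>-1/2$ is unbounded and you would need arbitrarily many terms. The cancellation is \emph{exact}, not asymptotic, and has to be verified as an identity among Pochhammer symbols. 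The cleanest way is to factor out the normalization $\lambda_{n,\alpha}$ first and compute $(B_n^\alpha)'(1)$ directly from~\eqref{rel-ortogonal}: using $R_k^\gamma(1)=1$ and $(R_n^\alpha)'(1)=\tfrac{n(n+2\alpha+1)}{2(\alpha+1)}$, one checks by a short Gamma-function manipulation that the $N$-parts of the $b_n$-term and of the $c_n$-term cancel \emph{identically}, leaving the $N$-independent closed form that the paper already records inside the displayed expression for $\|B_n^\alpha\|_{W_2^\alpha}^2$ (the quantity squared after the factor~$2N$). Multiplying that closed form by the known asymptotics~\eqref{lambda_n} of $\lambda_{n,\alpha}$ then gives $(n+1)^{-\alpha-7/2}$ in both cases $M=0$ and $M>0$. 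So your route works once you replace ``sufficient accuracy'' by ``exact Gamma identity'' and strip off $\lambda_{n,\alpha}$ before looking for the cancellation.
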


Let $S_n^{\gamma}f$ be the $n$-th partial sum of Fourier expansion in terms of orthonormal Gegenbauer polynomials,
\begin{gather*}
S_n^{\gamma}f(x)=\sum_{k=0}^n d_k^{\gamma}(f)P_k^{\gamma}(x),\qquad d_k^{\gamma}(f)=\int_{-1}^1 f(x)P_k^{\gamma}(x) {\rm d}\mu_{\gamma}(x).
\end{gather*}
From the main result in \cite{muckenhoupt1} we can deduce the following result
\begin{lem}\label{acotacion}
	Let $\gamma>-1$ and $1<p<\infty$. There exists a constant $C$, independent of $n$ and $f$, such that
	\begin{gather*}
	\big\|\big(1-(\cdot)^2\big)^a S^{\gamma}_nf\big\|_{L^p({\rm d}\mu_{\gamma})}\le C\big\|\big(1-(\cdot)^2\big)^a f\big\|_{L^p ({\rm d}\mu_{\gamma})}
	\end{gather*}
	if and only if
	\begin{gather*}
	\left|a+(\gamma+1)\bigg(\frac{1}{p}-\frac{1}{2}\bigg)\right|<\frac{1}{4}.
	\end{gather*}
\end{lem}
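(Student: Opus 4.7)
The plan is to reduce the assertion to the main weighted-$L^p$ theorem for Jacobi expansions proved by Muckenhoupt in \cite{muckenhoupt1}, by absorbing the external factor $(1-x^2)^a$ into a power weight inside the measure.

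First I would rewrite the target inequality in purely Lebesgue form. Since $d\mu_\gamma$ is a constant multiple of $(1-x^2)^\gamma\,dx$, setting $g(x)=(1-x^2)^a f(x)$ converts the claim into the uniform estimate
$$\|S_n^\gamma g\|_{L^p((1-x)^A(1+x)^A\,dx)} \le C\,\|g\|_{L^p((1-x)^A(1+x)^A\,dx)},$$
with the parameter choice $A=ap+\gamma$. Recognising the orthonormal Gegenbauer polynomial $P_n^\gamma$ as the symmetric Jacobi polynomial $P_n^{(\gamma,\gamma)}$ places the problem exactly into the framework of Muckenhoupt's characterisation.

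Second, I would apply Muckenhoupt's theorem for Jacobi expansions with power weights in the symmetric case $\alpha=\beta=\gamma$, $A=B$. In full generality his conditions consist of one inequality at each endpoint of $[-1,1]$, of the shape $\left|\tfrac{A+1}{p}-\tfrac{\gamma+1}{2}\right|<\tfrac14$, together with the local integrability constraint $A>-1$. By symmetry the two endpoint inequalities coincide here, and substituting $A=ap+\gamma$ gives
$$\left|\frac{A+1}{p}-\frac{\gamma+1}{2}\right| = \left|a+(\gamma+1)\!\left(\frac1p-\frac12\right)\right|<\frac14,$$
which is the exact condition of the lemma. Since Muckenhoupt's result is a characterisation, both the sufficiency and the necessity of this inequality are obtained at once.

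The main obstacle is essentially bookkeeping: verifying that the auxiliary conditions (in particular $A>-1$, which guarantees that $(1-x)^A(1+x)^A\,dx$ is locally finite, along with any secondary constraints attached to Muckenhoupt's statement) are automatically compatible with $\gamma>-1$ and with the displayed constraint on $a$ and $p$, so that no extra hypothesis slips from the general Jacobi statement into the Gegenbauer formulation. Once that parameter dictionary is fixed, the proof reduces to a direct quotation of \cite{muckenhoupt1} combined with the elementary weight manipulation above.
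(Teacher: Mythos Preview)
Your approach coincides with the paper's: the lemma is stated there without proof, merely as a direct specialisation of Muckenhoupt's weighted Jacobi result in \cite{muckenhoupt1}, so your plan of absorbing the factor $(1-x^2)^a$ into a power weight and quoting that characterisation is exactly what is intended. One small slip: the substitution $g=(1-x^2)^a f$ is unnecessary and, as written, conflates $S_n^\gamma g$ with $(1-x^2)^a S_n^\gamma f$; the clean reduction just moves $(1-x^2)^{ap}$ into the measure to obtain $\|S_n^\gamma f\|_{L^p((1-x^2)^A\,dx)}\le C\|f\|_{L^p((1-x^2)^A\,dx)}$ with $A=ap+\gamma$, after which your parameter computation goes through verbatim.
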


Let $d$ be an integer number. We define the transplantation operator
\begin{gather*}
T_{d}^{\beta,\gamma}f(x)=\sum_{k=0}^{\infty}d_k^{\gamma}(f)P_{k+d}^{\beta}(x).
\end{gather*}
The operator $T_{d}^{\beta,\gamma}$ is well defined, for example, for functions $f$ having a finite expansion in terms of the Gegenbauer polynomials $P_n^\gamma$. The following result plays a crucial role in our work. It is essentially a special case of a general weighted transplantation theorem due to Muckenhoupt, see \cite[Theorem 1.6]{muckenhoupt}.

\begin{lem}\label{mukchenhoupt}
	Let $\gamma>-1$, $\beta>-1$, and $1<p<\infty$. If $2(b+1)>-p(\beta+1/2)$ then	
	\begin{gather*}
	\left(\int_{-1}^{1}|T_{d}^{\beta,\gamma}f(x)|^p\big(1-x^2\big)^{\frac{p}{2}(\beta+1/2)+b} {\rm d}x\right)^{1/p} \le C\left(\int_{-1}^{1}|f(x)|^p\big(1-x^2\big)^{\frac{p}{2}(\gamma+1/2)+b} {\rm d}x\right)^{1/p}.
	\end{gather*}
\end{lem}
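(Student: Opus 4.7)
The plan is to deduce the statement as a direct specialization of Muckenhoupt's weighted transplantation theorem for Jacobi expansions, \cite[Theorem~1.6]{muckenhoupt}, together with a routine argument to accommodate the integer degree shift~$d$.

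First I would specialize Muckenhoupt's theorem to the Gegenbauer setting. The orthonormal Gegenbauer polynomials $P_n^{\gamma}$ coincide, up to a harmless renormalization, with the symmetric Jacobi polynomials $P_n^{(\gamma,\gamma)}$, and similarly for $P_n^{\beta}$; hence $T_{0}^{\beta,\gamma}$ is exactly the Jacobi transplantation operator from parameters $(\gamma,\gamma)$ to $(\beta,\beta)$. Writing both weights in the Jacobi form $(1-x)^{A}(1+x)^{B}$, the symmetry $A=B$ collapses Muckenhoupt's two endpoint hypotheses (at $x=1$ and at $x=-1$) into a single inequality.

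Next I would check that this inequality reduces to $2(b+1)>-p(\beta+1/2)$. The exponents $\frac{p}{2}(\beta+1/2)$ and $\frac{p}{2}(\gamma+1/2)$ are the ``natural'' Jacobi-function exponents attached to the respective parameters, in the sense that $(1-x^2)^{\eta/2+1/4}|P_n^{\eta}(x)|$ is uniformly bounded on $[-1,1]$ (compare Lemma~\ref{acotacionQ}). With these natural exponents built in, the compatibility constraint on the source weight is automatic, while the analogous constraint on the target weight yields precisely $\frac{p}{2}(\beta+1/2)+b>-1$, that is, the stated inequality; this is equivalent to the local integrability of the target weight at $x=\pm 1$.

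Finally I would absorb the degree shift~$d$. For $d\ge 0$, $T_{d}^{\beta,\gamma}$ factors as $T_{0}^{\beta,\gamma}$ followed by the index shift $\sum_{k}a_{k}P_{k}^{\beta}\mapsto\sum_{k}a_{k}P_{k+d}^{\beta}$, and such a shift is bounded on the weighted $L^{p}$ space in question by a finite application of Muckenhoupt's Jacobi multiplier theorem (or, equivalently, by composing two transplantations through an auxiliary shifted Jacobi system). For $d<0$ one subtracts the first $|d|$ low-order terms and applies the same argument to the tail. The main obstacle is the careful parameter matching in the first two steps: one must verify that the symmetric specialization with the natural exponent really produces only the stated one-sided inequality and no additional hidden constraint on the source side. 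Once that bookkeeping is carried out, the lemma follows with no further analysis.
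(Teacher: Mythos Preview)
Your approach is correct and matches the paper's: the lemma is stated there without proof, simply as ``essentially a special case of a general weighted transplantation theorem due to Muckenhoupt, see \cite[Theorem~1.6]{muckenhoupt},'' which is exactly your plan of specializing that theorem to the symmetric Jacobi (Gegenbauer) case. One small simplification: Muckenhoupt's Theorem~1.6 already incorporates an arbitrary integer index shift, so your third step handling $d$ separately is unnecessary---the shifted operator $T_d^{\beta,\gamma}$ falls directly under the cited theorem once the parameters are matched.
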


The last tool that we will need for the proof of Theorem \ref{main} is related to the boundedness of a specific multiplier for Gegenbauer expansions. We define the operator
\begin{gather*}
R^\gamma f(x)=\sum_{k=0}^\infty \frac{d_k^\gamma(f)}{k+1}P_{k}^\gamma(x).
\end{gather*}
\begin{lem}\label{lem:muck}
Let $\gamma>-1$ and $1<p<\infty$. If $|2b+1|<p$ and
\begin{gather*}
\left|\frac{2(b+1)}{p}-\frac{1}{2}\right|<\min\{\gamma+1,1/2\},
\end{gather*}
then	
	\begin{gather*}
	\left(\int_{-1}^{1}|R^\gamma f(x)|^p\big(1-x^2\big)^{\frac{p}{2}(\gamma+1/2)+b} {\rm d}x\right)^{1/p} \le C\left(\int_{-1}^{1}|f(x)|^p\big(1-x^2\big)^{\frac{p}{2}(\gamma+1/2)+b} {\rm d}x\right)^{1/p}.
	\end{gather*}
\end{lem}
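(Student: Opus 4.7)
The plan is to reduce the estimate to the weighted Pollard bound of Lemma~\ref{acotacion} via Abel's summation by parts. Using the telescoping identity $\frac{1}{k+1} = \sum_{n\ge k} \frac{1}{(n+1)(n+2)}$ and interchanging summation, for any polynomial $f$ I obtain
\[
R^\gamma f(x) = \sum_{n=0}^\infty \frac{S_n^\gamma f(x)}{(n+1)(n+2)},
\]
since the boundary term $(N+1)^{-1} S_N^\gamma f$ vanishes as $N\to\infty$ (indeed $S_N^\gamma f = f$ once $N\ge \deg f$).

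To apply Lemma~\ref{acotacion} to each partial sum with the weight appearing in the statement, I translate $(1-x^2)^{\frac{p}{2}(\gamma+1/2)+b}\,dx$ into the form $(1-x^2)^{pa}\,d\mu_\gamma$ by setting $a := \tfrac{1}{2}(\gamma+\tfrac{1}{2}) + (b-\gamma)/p$. A short computation yields
\[
a + (\gamma+1)\left(\tfrac{1}{p} - \tfrac{1}{2}\right) = \tfrac{b+1}{p} - \tfrac{1}{4},
\]
so the Pollard condition $|a+(\gamma+1)(1/p-1/2)|<1/4$ becomes $\left|\tfrac{2(b+1)}{p} - \tfrac{1}{2}\right| < \tfrac{1}{2}$, which is implied by the hypothesis $\left|\tfrac{2(b+1)}{p} - \tfrac{1}{2}\right| < \min\{\gamma+1, 1/2\} \le \tfrac{1}{2}$. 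Hence the partial sums $S_n^\gamma$ are uniformly bounded on $L^p(w\,dx)$, where $w(x) = (1-x^2)^{\frac{p}{2}(\gamma+1/2)+b}$.

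Applying Minkowski's inequality term by term to the series for $R^\gamma f$, and using $\sum_n 1/((n+1)(n+2)) = 1$, I obtain $\|R^\gamma f\|_{L^p(w\,dx)} \le C \|f\|_{L^p(w\,dx)}$ for all polynomials $f$. The estimate extends to the full weighted $L^p$ space by density, and the technical hypothesis $|2b+1|<p$ enters precisely here: it places the weight in the Muckenhoupt class appropriate for polynomials to be dense. The argument is essentially routine once Lemma~\ref{acotacion} is available; the only delicate point is the arithmetic of the weight translation, through which one verifies that the stricter $\min\{\gamma+1,1/2\}$ threshold of the hypothesis subsumes the Pollard range uniformly in $\gamma>-1$, so that no case distinction on the sign of $\gamma+1/2$ is needed.
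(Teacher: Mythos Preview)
Your argument is correct and takes a genuinely different route from the paper. The paper does not prove Lemma~\ref{lem:muck} directly: it simply observes that the sequence $\{1/(k+1)\}$ lies in Muckenhoupt's multiplier class $M(1,1)$ and invokes \cite[Theorem~1.10]{muckenhoupt}. Your approach, by contrast, avoids the general multiplier machinery entirely: Abel summation rewrites $R^\gamma f$ as the convex combination $\sum_{n\ge 0}\frac{1}{(n+1)(n+2)}\,S_n^\gamma f$, and then Minkowski together with the uniform Pollard bound of Lemma~\ref{acotacion} gives the estimate immediately on polynomials. This is more elementary and keeps the argument self-contained within the paper. One small remark: the hypothesis $|2b+1|<p$ does not actually play the role you assign it. Your weight arithmetic shows that the second hypothesis already forces the Pollard condition $\bigl|\tfrac{2(b+1)}{p}-\tfrac12\bigr|<\tfrac12$, equivalently $-1<b<\tfrac{p}{2}-1$, and this interval is strictly contained in $\{|2b+1|<p\}$; it also already guarantees that the weight exponent $\tfrac{p}{2}(\gamma+\tfrac12)+b$ exceeds $-1$, so integrability and density of polynomials come for free. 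Thus your proof in fact does not need the first hypothesis at all --- it is a vestige of Muckenhoupt's more general theorem --- but this only strengthens, not weakens, your conclusion.
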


This lemma is a particular case of \cite[Theorem~1.10]{muckenhoupt} because the multiplier $1/(k+1)$ belongs to the class $M(1,1)$ there defined.

\section{Proof of Theorem \ref{main}}\label{section3}
Taking the kernel
\begin{gather*}
L_n(x,y)=\sum_{k=0}^n Q_k^{\alpha}(x)Q_k^{\alpha}(y),
\end{gather*}
it is easy to see that
\begin{gather*}
G_nf(x)=\langle L_n(x,y),f\rangle_S.
\end{gather*}
Recall that
\begin{gather*}
\|G_nf\|_{W_p^{\alpha}}^p=\|G_nf\|_{L^p({\rm d}\mu_{\alpha})}^p+M\big(|G_nf(1)|^p+|G_nf(-1)|^p\big)\\
\hphantom{\|G_nf\|_{W_p^{\alpha}}^p=}{} +N\big(|(G_nf)'(1)|^p+(G_nf)'(-1)|^p\big).
\end{gather*}

The necessity of the condition \eqref{acotacionp} is a consequence of \cite[Theorem 1]{Fejzullahu} and its sufficiency will be obtained from two following propositions.
\begin{propo}\label{prop1} Let $\alpha>-1/2$ and $1<p<\infty$. If \eqref{acotacionp} holds, then
	\begin{gather}\label{des-1}
	\|G_nf(x)\|_{L^p({\rm d}\mu_{\alpha})}\le C\|f\|_{{W_p^{\alpha}}},
	\end{gather}
where $C$ is a constant independent of $n$ and $f$.
\end{propo}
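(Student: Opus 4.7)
The plan is to split $G_nf$ according to the three summands of~\eqref{inner-product},
\[
G_nf(x)=\int_{-1}^{1}L_n(x,y)f(y)\,{\rm d}\mu_\alpha(y)+M\!\!\sum_{\eta=\pm 1}\!\!f(\eta)L_n(x,\eta)+N\!\!\sum_{\eta=\pm 1}\!\!f'(\eta)\partial_yL_n(x,\eta),
\]
and to bound each piece separately in $L^p({\rm d}\mu_\alpha)$ by a constant times $\|f\|_{W_p^\alpha}$. For the two endpoint pieces the argument is direct: in the regimes where $M$ or $N$ actually contributes, Lemma~\ref{extremos} yields $|Q_k^\alpha(\pm 1)|\le C(k+1)^{-\alpha-3/2}$ and $|(Q_k^\alpha)'(\pm 1)|\le C(k+1)^{-\alpha-7/2}$, while Lemma~\ref{acotacionQ} gives $|Q_k^\alpha(x)|\le C(1-x^2)^{-\alpha/2-1/4}$. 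Both $k$-series converge absolutely since $\alpha>-1/2$, and a pointwise estimate then produces a bound of the form $C\|f\|_{W_p^\alpha}(1-x^2)^{-\alpha/2-1/4}$. Since $(1-x^2)^{-\alpha/2-1/4}\in L^p({\rm d}\mu_\alpha)$ precisely when $p<4(\alpha+1)/(2\alpha+1)$---the upper bound of~\eqref{acotacionp}---this handles the boundary terms.

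The substantial part is the integral piece. Substituting~\eqref{rel-ortonormal} into both copies of $Q_k^\alpha$ produces a nine-term decomposition indexed by $\ell,\ell'\in\{0,1,2\}$; after rewriting the inner integral as a constant multiple of $d_{k-2\ell'}^{\alpha+2\ell'}((1-\cdot^2)^{-\ell'}f)$ (the $(1-y^2)^{-\ell'}$ singularity is absorbed by the ${\rm d}\mu_{\alpha+2\ell'}$ density) and shifting the summation index to $j=k-2\ell'$, the $(\ell,\ell')$-summand takes the form
\[
(1-x^2)^{\ell}\,T^{\alpha+2\ell,\alpha+2\ell'}_{2(\ell'-\ell)}\!\left(\mathcal{M}^{(\ell,\ell')}\,S^{\alpha+2\ell'}_{n-2\ell'}\!\left((1-\cdot^2)^{-\ell'}f\right)\right)(x),
\]
where $\mathcal{M}^{(\ell,\ell')}$ is the diagonal Fourier multiplier with symbol $m^{(\ell,\ell')}_j=A_{j+2\ell',2\ell}\,A_{j+2\ell',2\ell'}$ (controlled by Lemma~\ref{A_nB_nC_n}). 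Applying Lemma~\ref{mukchenhoupt} to the transplantation with $b=\alpha-(\alpha/2+1/4)p$ (its hypothesis $2(b+1)>-p(\alpha+2\ell+1/2)$ is automatic for $\alpha>-1/2$) and then Lemma~\ref{acotacion} to the partial sum with $a=\ell'(p-2)/p$ (a choice that makes the $\ell'$-dependent terms cancel and reduces the admissibility condition precisely to $|(\alpha+1)(1/p-1/2)|<1/4$, i.e., to~\eqref{acotacionp}), the bound reduces to $\|(1-\cdot^2)^{-2\ell'/p}f\|_{L^p({\rm d}\mu_{\alpha+2\ell'})}$, which equals a constant multiple of $\|f\|_{L^p({\rm d}\mu_\alpha)}$ by an elementary change of variables.

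The principal technical obstacle is absorbing the multiplier $\mathcal{M}^{(\ell,\ell')}$. By Lemma~\ref{A_nB_nC_n} its symbol is either a bounded sequence with an asymptotic expansion in powers of $1/(j+1)$ (of the form $g(j,J)$) or carries an extra decay factor $(j+1)^{-s}$ with $s\in\{2\alpha+2,\,4\alpha+4\}$. Splitting off the leading constant and handling the tail through iterated applications of Lemma~\ref{lem:muck} (the power sequences $1/(j+1)^s$ lie in the class $M(1,1)$ used in Muckenhoupt's theorem in~\cite{muckenhoupt1}) reduces everything to finitely many bounded compositions of partial sum and transplantation operators, thereby establishing~\eqref{des-1}.
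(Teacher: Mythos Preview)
Your approach is essentially the paper's: the same three–part split of $G_nf$, the same endpoint treatment via Lemmas~\ref{acotacionQ} and~\ref{extremos}, and the same nine–term decomposition of the integral piece into weighted compositions of partial sums, transplantation operators, and a scalar Fourier multiplier. The one noteworthy difference is in how the multiplier $m_j^{(\ell,\ell')}=A_{j+2\ell',2\ell}A_{j+2\ell',2\ell'}$ is absorbed. The paper does not invoke a general multiplier theorem; instead it writes $A_{k,j}A_{k,m}=S_0+S_1/(k+1)+e_k$ with $|e_k|\le Ck^{-\theta}$, $\theta>1$, and treats the three resulting kernels $K_n^{j,m,s}$ separately: $s=0$ by Lemma~\ref{acotacion} (plus Lemma~\ref{mukchenhoupt} when $m<j$), $s=1$ by inserting the single operator $R^{\alpha+m}$ of Lemma~\ref{lem:muck}, and $s=2$ by direct H\"older together with $\|(1-\cdot^2)^{j/2}P_{k-j}^{\alpha+j}\|_{L^p({\rm d}\mu_\alpha)}\le C$. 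This is cleaner than your ``iterated Lemma~\ref{lem:muck}'' scheme, which as stated does not cover the non-integer $O((j+1)^{-(2\alpha+2)})$ remainder in $h(n,\alpha)$; you would either need the full multiplier theorem (which lives in~\cite{muckenhoupt}, not~\cite{muckenhoupt1}) and a verification that the remainder sequence---not just the pure power---satisfies the $M(1,1)$ condition, or else fall back on the paper's direct summation argument. Apart from this looseness and the reference slip, your outline matches the paper.
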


\begin{propo}\label{prop2} Let $\alpha>-1/2$ and $1<p<\infty$. If \eqref{acotacionp} holds, then
	\begin{equation*}
M\big(|G_nf(1)|^p+|G_nf(-1)|^p\big)+N\big(|(G_nf)'(1)|^p+|(G_nf)'(-1)|^p\big)\le C\|f\|_{W_p^{\alpha}},
	\end{equation*}
where $C$ is a constant independent of $n$ and $f$.
\end{propo}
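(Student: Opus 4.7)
The plan is to bound each of the four boundary terms $G_nf(\pm 1)$ and $(G_nf)'(\pm 1)$ separately, and only in the cases where the corresponding prefactor ($M$ or $N$) is nonzero. Starting from $G_nf(x) = \langle L_n(x,\cdot),f\rangle_S$ with $L_n(x,y)=\sum_{k=0}^n Q_k^\alpha(x)Q_k^\alpha(y)$, expanding the inner product gives
\begin{gather*}
G_nf(1) = \int_{-1}^{1} f(y)L_n(1,y)\,{\rm d}\mu_\alpha(y) + M\sum_{\varepsilon=\pm 1} f(\varepsilon)L_n(1,\varepsilon) + N\sum_{\varepsilon=\pm 1} f'(\varepsilon)\partial_y L_n(1,\varepsilon),
\end{gather*}
with the obvious analogues for $G_nf(-1)$ and for $(G_nf)'(\pm 1)$ (in which $Q_k^\alpha(1)$ in the first variable is replaced by $Q_k^\alpha(-1)$ or by $(Q_k^\alpha)'(\pm 1)$).

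The four pointwise sums are treated by Lemma \ref{extremos}. For instance, in the case $M>0$, $|L_n(1,1)| = \sum_{k=0}^n |Q_k^\alpha(1)|^2 \approx \sum_k(k+1)^{-2\alpha-3}$, which is bounded for $\alpha>-1/2$; the analogous sums $\sum Q_k^\alpha(\pm 1)(Q_k^\alpha)'(\pm 1)$ and $\sum |(Q_k^\alpha)'(\pm 1)|^2$ appearing in the other three decompositions are all uniformly bounded in $n$ in each relevant case for $(M,N)$. Together with the trivial bounds $M^{1/p}|f(\pm 1)|,N^{1/p}|f'(\pm 1)|\le \|f\|_{W_p^\alpha}$, this yields admissible contributions with no restriction on $p$.

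The main step is the integral term $I := \int_{-1}^{1} f(y)L_n(1,y)\,{\rm d}\mu_\alpha(y)$ (and its analogues). Since $(1-x^2)^j$ vanishes at $x=\pm 1$, the expansion \eqref{rel-ortonormal} reduces to $Q_k^\alpha(1) = A_{k,0}P_k^\alpha(1)$, with the analogous two-term reduction $(Q_k^\alpha)'(1) = -2A_{k,2}P_{k-2}^{\alpha+2}(1) + A_{k,0}(P_k^\alpha)'(1)$ for the derivative case; applying \eqref{rel-ortonormal} also in $y$ gives
\begin{gather*}
L_n(1,y) = \big(1-y^2\big)^2 U_n^{(4)}(y) + \big(1-y^2\big)U_n^{(2)}(y) + U_n^{(0)}(y),\\
U_n^{(j)}(y) = \sum_{k=j}^n A_{k,0}A_{k,j}P_k^\alpha(1)\, P_{k-j}^{\alpha+j}(y).
\end{gather*}
H\"older's inequality reduces the estimate on $I$ to a uniform $L^{p'}({\rm d}\mu_\alpha)$-bound on each $(1-y^2)^j U_n^{(j)}(y)$. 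Each $U_n^{(j)}$ is read as a partial sum of a Gegenbauer-$(\alpha+j)$ expansion whose coefficients $A_{k,0}A_{k,j}P_k^\alpha(1)$ have known decay (via Lemma \ref{A_nB_nC_n} and $P_k^\gamma(1)\approx (k+1)^{\gamma+1/2}$). One then applies Lemma \ref{acotacion} to push the partial sum through a weighted $L^{p'}$ norm, Lemma \ref{mukchenhoupt} to convert between ${\rm d}\mu_{\alpha+j}$ and ${\rm d}\mu_\alpha$ (absorbing the weight $(1-y^2)^j$), and Lemma \ref{lem:muck} to handle the $(k+1)^{-1}$-type decay of the coefficients.

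The main obstacle is bookkeeping: for each of the three cases $(M,N)\in\{(0,+),(+,+),(+,0)\}$, and for the derivative analogue, one must verify that the combined decay of $A_{k,0}A_{k,j}P_k^\alpha(1)$ (or of its derivative variants) together with the weights $(1-y^2)^j$ places the resulting estimates inside the admissibility regions of Lemmas \ref{acotacion}, \ref{mukchenhoupt}, and \ref{lem:muck} simultaneously. The endpoints of those regions coincide exactly with $p=4(\alpha+1)/(2\alpha+3)$ and $p=4(\alpha+1)/(2\alpha+1)$, which is why the precise range \eqref{acotacionp} appears.
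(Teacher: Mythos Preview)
Your overall architecture --- expanding $G_nf(\pm1)$ and $(G_nf)'(\pm1)$ via the inner product, handling the point-mass sums with Lemma~\ref{extremos}, and bounding the integral term by duality --- is exactly what the paper does. The treatment of the boundary sums is identical.

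For the integral term, however, the paper takes a much shorter route than the one you outline. Instead of decomposing $L_n(1,y)$ through \eqref{rel-ortonormal} and invoking Lemmas~\ref{acotacion}, \ref{mukchenhoupt} and~\ref{lem:muck}, the paper simply combines Lemma~\ref{acotacionQ} with Lemma~\ref{extremos}: for $M>0$,
\[
|L_n(1,y)|\le \sum_{k\ge 0}|Q_k^\alpha(1)|\,|Q_k^\alpha(y)|\le C\big(1-y^2\big)^{-\alpha/2-1/4}\sum_{k\ge 0}(k+1)^{-\alpha-3/2}\le C\big(1-y^2\big)^{-\alpha/2-1/4},
\]
and similarly $|\partial_x L_n(1,y)|\le C(1-y^2)^{-\alpha/2-1/4}$ when $N>0$. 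A single H\"older inequality then finishes the job, using only that $\int_{-1}^1 (1-y^2)^{-q(\alpha+1/2)/2+\alpha}\,{\rm d}y<\infty$ for $q<4(\alpha+1)/(2\alpha+1)$, i.e.\ $p>4(\alpha+1)/(2\alpha+3)$. Note in particular that only the \emph{lower} endpoint of \eqref{acotacionp} is needed here, contrary to your last paragraph.

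There is also a genuine hazard in your plan. You propose to write $(Q_k^\alpha)'(1)=-2A_{k,2}P_{k-2}^{\alpha+2}(1)+A_{k,0}(P_k^\alpha)'(1)$ and then treat the resulting kernel pieces separately. But for $N>0$ each of these two summands has size $\approx (k+1)^{\alpha+5/2}$ (case $M=0$) or $\approx (k+1)^{-\alpha+1/2}$ (case $M>0$), whereas by Lemma~\ref{extremos} their sum is only $\approx (k+1)^{-\alpha-7/2}$. The decay you need comes from a cancellation between the two terms; if you split them and feed each into Lemmas~\ref{acotacion}--\ref{lem:muck} separately, the coefficients do not decay (indeed they grow in the case $M=0$), and no uniform $L^{p'}$ bound is available. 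The fix is either to keep $(Q_k^\alpha)'(1)$ intact and use Lemma~\ref{extremos} for its size, or --- much more simply --- to use Lemma~\ref{acotacionQ} directly as the paper does and avoid the whole decomposition.
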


\begin{proof}[Proof of Proposition \ref{prop1}] From Minkowski's inequality, we know that
	\begin{gather*}
	\|G_nf(x)\|_{L^p({\rm d}\mu_{\alpha})}\le
	\left(\int_{-1}^{1}\left|\int_{-1}^{1}f(y)L_n(x,y) {\rm d}{\mu_{\alpha}}(y)\right|^p {\rm d}\mu_{\alpha}(x)\right)^{1/p}\\
\hphantom{\|G_nf(x)\|_{L^p({\rm d}\mu_{\alpha})}\le}{}	+\left(\int_{-1}^{1}|M(f(1)L_n(x,1)+f(-1)L_n(x,-1)|^p {\rm d}\mu_{\alpha}(x)\right)^{1/p}\\
\hphantom{\|G_nf(x)\|_{L^p({\rm d}\mu_{\alpha})}\le}{}	+\left(\int_{-1}^{1}\left|N(f'(1)\frac{\partial L_n}{\partial y}(x,1)+f'(-1)\frac{\partial L_n}{\partial y}(x,-1))\right|^p {\rm d}\mu_{\alpha}(x)\right)^{1/p}.
	\end{gather*}	
First, it will be proved that
\begin{gather}\label{des-des}
\left(\int_{-1}^{1}\left|\int_{-1}^{1}f(x)L_n(x,y) {\rm d}\mu_{\alpha}(y)\right|^p {\rm d}\mu_{\alpha}(x)\right)^{1/p}\le C\|f\|_{L^p({\rm d}\mu_{\alpha})}.
	\end{gather}
Using \eqref{rel-ortonormal}, we have
	\begin{gather*}
	\int_{-1}^{1}f(y)L_n(x,y) {\rm d}\mu_{\alpha}(y)=\sum_{j,m\in \{4,2,0\}}M_n^{j,m}f(x),
	\end{gather*}
where
\begin{gather*}
M_n^{j,m}f(x)=\int_{-1}^{1} f(y) K_n^{j,m}(x,y) {\rm d}\mu_\alpha(y),\\
K_n^{j,m}(x,y)=\big(1-x^2\big)^{j/2}\big(1-y^2\big)^{m/2}\sum_{k=0}^{n}A_{k,j}A_{k,m}P_{k-j}^{\alpha+j}(x)P_{k-m}^{\alpha+m}(y).
\end{gather*}
By using a standard duality argument, to deduce \eqref{des-des} it is enough to prove
\begin{gather*}
\|M_n^{j,m}f\|_{L^p({\rm d}\mu_{\alpha})}\le C\|f\|_{L^p({\rm d}\mu_{\alpha})}
\end{gather*}
for $m\le j$.

By Lemma \ref{A_nB_nC_n}, each operator $M_n^{j,m}$ can be decomposed as
\begin{gather*}
M_n^{j,m}f(x)=S_0 M_n^{j,m,0}f(x)+S_1 M_n^{j,m,1}f(x)+S_2 M_n^{j,m,2}f(x),
\end{gather*}
for some nonnegative constants $S_0$, $S_1$ and $S_2$, with
\begin{gather*}
\begin{split} & M_n^{j,m,s}f(x)=\int_{-1}^{1} f(y) K_n^{j,m,s}(x,y) {\rm d}\mu_\alpha(y), \qquad s=0,1,2,\\
& K_n^{j,m,s}(x,y)=\big(1-x^2\big)^{j/2}\big(1-y^2\big)^{m/2}\sum_{k=0}^{n}(k+1)^{-s}P_{k-j}^{\alpha+j}(x)P_{k-m}^{\alpha+m}(y), \qquad s=0,1,
\end{split}
\end{gather*}
and
\begin{gather*}
\big|K_n^{j,m,2}(x,y)\big|\le C\big(1-x^2\big)^{j/2}\big(1-y^2\big)^{m/2}\sum_{k=0}^{n}k^{-\theta}\big|P_{k-j}^{\alpha+j}(x)\big|\big|P_{k-m}^{\alpha+m}(y)\big|,
\end{gather*}
where $\theta>1$.

From the well-known estimate (it follows from \cite[Theorem~7.32.2, p.~169]{Szego})
\begin{gather*}
|P_n^\alpha(x)|\le C\big(1-x^2\big)^{-(\alpha/2+1/4)},\qquad x\in [-1,1],
\end{gather*}
with $C$ a constant independent of $n$, we deduce
\begin{gather*}
\big\|\big(1-(\cdot)^2\big)^{j/2}P_{n-j}^{\alpha+j}\big\|_{L^p({\rm d}\mu_\alpha)}\le C
\end{gather*}
for $p<4(\alpha+1)/(2\alpha+1)$. In this way, applying H\"{o}lder inequality,
\begin{gather*}
\big\|M_n^{j,m,2}f\big\|_{L^p({\rm d}\mu_{\alpha})}\le C\|f\|_{L^p({\rm d}\mu_{\alpha})},
\end{gather*}
for each $p$ verifying \eqref{acotacionp}.

It is easy to check that
\begin{gather*}
M_{n}^{j,j,0}f(x)=K_\alpha\big(1-x^2\big)^{j/2}S_{n-j}^{\alpha+j}g(x),
\end{gather*}
for a constant $K_\alpha$, with $j=4,2,0$ and $g(x)=\big(1-x^2\big)^{-j/2}f(x)$. Then, if $p$ satisfies \eqref{acotacionp}, from Lemma \ref{acotacion}, with $a=j(1/2-1/p)$ and $\gamma=\alpha+j$, we deduce
\begin{gather*}
\big\|M_n^{j,j,0}f\big\|_{L^p({\rm d}\mu_\alpha)} =K_\alpha\big\|\big(1-(\cdot)^2\big)^{j(1/2-1/p)}S_{n-j}^{\alpha+j}g\big\|_{L^p({\rm d}\mu_{\alpha+j})}\\
 \hphantom{\big\|M_n^{j,j,0}f\big\|_{L^p({\rm d}\mu_\alpha)}}{} \le C\big\|\big(1-(\cdot)^2\big)^{j(1/2-1/p)}g\big\|_{L^p({\rm d}\mu_{\alpha+j})}\le C \|f\|_{L^p({\rm d}\mu_\alpha)}.
\end{gather*}

Now, for $m<j$, we can check that
\begin{gather*}
M_n^{j,m,0}f(x)=C_\alpha \big(1-x^2\big)^{j/2}T_{m-j}^{\alpha+j,\alpha+m}\big(S_{n-m}^{\alpha+m}h\big)(x),
\end{gather*}
for a constant $C_\alpha$, whith $h(x)=\big(1-x^2\big)^{-m/2}f(x)$. So, using Lemma~\ref{mukchenhoupt} with $\beta=\alpha+j$, $\gamma=\alpha+m$, and $b=\alpha-p(\alpha+1/2)/2$, we have
\begin{gather*}
\big\|M_n^{j,m,0}f\big\|_{L^p({\rm d}\mu_\alpha)} =C_\alpha\big\|\big(1-(\cdot)^2\big)^{j/2}T_{m-j}^{\alpha+j,\alpha+m}\big(S_{n-m}^{\alpha+m}h\big)\big\|_{L^p({\rm d}\mu_{\alpha})}\\
\hphantom{\big\|M_n^{j,m,0}f\big\|_{L^p({\rm d}\mu_\alpha)}}{} \le C\big\|\big(1-(\cdot)^2\big)^{m/2}S_{n-m}^{\alpha+m}h\big\|_{L^p({\rm d}\mu_{\alpha})}\le C \|f\|_{L^p({\rm d}\mu_\alpha)},
\end{gather*}
where in the last step we have used Lemma~\ref{acotacion} as we have done for~$M_n^{j,j,0}$.

To analyze the operators $M_n^{j,m,1}$ we observe the identities
\begin{gather*}
M_{n}^{j,j,1}f(x)=K_\alpha \big(1-x^2\big)^{j/2}R^{\alpha+j}\big(S_{n-j}^{\alpha+j}g\big)(x), \qquad j=4,2,0,\\
M_n^{j,m,1}f(x)=C_\alpha \big(1-x^2\big)^{j/2}T_{m-j}^{\alpha+j,\alpha+m}\big(R^{\alpha+m}\big(S_{n-m}^{\alpha+m}h\big)\big)(x),\qquad m<j,
\end{gather*}
with $g(x)=\big(1-x^2\big)^{-j/2}f(x)$ and $h(x)=\big(1-x^2\big)^{-m/2}f(x)$. Then the boundedness of these operators follows as in the previous cases but using moreover the estimate
\begin{gather*}
\big\|\big(1-(\cdot)^2\big)^{j/2}R^{\alpha+j}f\big\|_{L^p({\rm d}\mu_\alpha)}\le C \big\|\big(1-(\cdot)^2\big)^{j/2}f\big\|_{L^p({\rm d}\mu_\alpha)},
\end{gather*}
which can be deduced from Lemma~\ref{lem:muck} taking $b=\alpha-p(\alpha+1/2)/2$ and $\gamma=\alpha+j$ under the assumption \eqref{acotacionp}. In this way the proof of~\eqref{des-des} is completed.

To finish the proof of \eqref{des-1}, we are going to prove the estimates
\begin{gather}
	\int_{-1}^1|M(f(1)L_n(x,1)+f(-1)L_n(x,-1)|^p\big(1-x^2\big)^{\alpha} {\rm d}x
\le C M^p \big(|f(1)|^p+|f(-1)|^p\big),\label{des-2}\\
	\int_{-1}^{1}\left|N\left(f'(1)\frac{\partial L_n}{\partial y}(x,1)+f'(-1)\frac{\partial L_n}{\partial y}(x,-1)\right)\right|^p \big(1-x^2\big)^{\alpha} {\rm d}x \nonumber\\
\qquad{}	\le C N^p \big(|f'(1)|^p+|f'(-1)|^p\big).\label{des-3}
	\end{gather}

For \eqref{des-2} we suppose $M>0$, because in other case this element does not appear in the norm. From Lemmas~\ref{acotacionQ} and \ref{extremos}, for $x\in [-1,1]$, we have
\begin{gather*}
	|L_n(x,1)|\le C\big(1-x^2\big)^{-\frac{\alpha}{2}-\frac{1}{4}}, \qquad |L_n(x,-1)|\le C\big(1-x^2\big)^{-\frac{\alpha}{2}-\frac{1}{4}}.
\end{gather*}
Then \eqref{des-2} is deduced immediately because the integral
\begin{gather}\label{ec:int}
\int_{-1}^{1}\big(1-x^2\big)^{-\frac{p}{2}(\alpha+1/2)+\alpha} {\rm d}x
\end{gather}
is finite for $p<4(\alpha+1)/(2\alpha+1)$.

To prove \eqref{des-3} we suppose $N>0$, because if $N=0$ the inequality is trivially true.
Again, by Lemmas \ref{acotacionQ} and \ref{extremos}, for $x\in [-1,1]$, we obtain the bounds
\begin{gather*}
\left|\frac{\partial L_n}{\partial y}(x,1)\right|\le C\big(1-x^2\big)^{-\frac{\alpha}{2}-\frac{1}{4}},\qquad
\left|\frac{\partial L_n}{\partial y}(x,-1)\right|\le C\big(1-x^2\big)^{-\frac{\alpha}{2}-\frac{1}{4}}.
\end{gather*}
Then, as in the previous case, \eqref{des-3} is a consequence of the finiteness of the integral \eqref{ec:int}.
\end{proof}

\begin{proof}[Proof of Proposition \ref{prop2}]
We are going to show the estimates
\begin{gather}\label{ec:prop2-1}
|G_nf(1)|\le C \|f\|_{W_p^\alpha}, \qquad \text{for} \quad M>0,
\end{gather}
and
\begin{gather}\label{ec:prop2-2}
|(G_nf)'(1)|\le C \|f\|_{W_p^\alpha}, \qquad \text{for} \quad N>0.
\end{gather}
The analysis of $|G_nf(-1)|$, for $M>0$, and $|(G_nf)'(-1)|$, for $N>0$, are completely similar and the details will be omitted.

It is clear that	
\begin{gather*}
	G_nf(1)=\int_{-1}^{1}f(y)L_n(1,y) {\rm d}\mu_\alpha(y)
	+M(f(1)L_n(1,1)+f(-1)L_n(1,-1))\\
\hphantom{G_nf(1)=}{} +N\left(f'(1)\frac{\partial L_n}{\partial y}(1,1)+f'(-1)\frac{\partial L_n}{\partial y}(1,-1)\right).
\end{gather*}
If $M>0$, from Lemmas \ref{acotacionQ} and \ref{extremos} it is obtained that
\begin{gather*}
	|L_n(1,y)|\le C\big(1-y^2\big)^{-\frac{\alpha}{2}-\frac{1}{4}}, \qquad y\in [-1,1].
\end{gather*}
Then, applying H\"{o}lder inequality, we have
\begin{gather*}
	\left|\int_{-1}^{1}f(y)L_n(1,y) {\rm d}\mu_\alpha(y)\right|
	 \le C\|f\|_{L^p({\rm d}\mu_{\alpha})}\left(\int_{-1}^1\big(1-y^2\big)^{-\frac{q}{2}(\alpha+1/2)+\alpha}{\rm d}y\right)^{p/q}
 \le C\|f\|_{L^p({\rm d}\mu_{\alpha})}
\end{gather*}
because	the last integral converges if $q<4(\alpha+1)/(2\alpha+1)$, which is equivalent to $p>4(\alpha+1)/(2\alpha+3)$. On the other hand, using again Lemma~\ref{extremos} we deduce the bounds
\begin{gather*}
|L_n(1,1)|\le C,\qquad |L_n(1,-1)|\le C, \qquad \text{for} \quad N\ge 0
\end{gather*}
and
\begin{gather*}
\left|\frac{\partial L_n}{\partial y}(1,1)\right|\le C,\qquad \left|\frac{\partial L_n}{\partial y}(1,-1)\right|\le C, \qquad \text{for} \quad N>0,
\end{gather*}
which imply, analyzing separately the cases $N>0$ and $N=0$,
\begin{gather*}
\left |M(f(1)L_n(1,1)+f(-1)L_n(1,-1))+N\left(f'(1)\frac{\partial L_n}{\partial y}(1,1)+f'(-1)\frac{\partial L_n}{\partial y}(1,-1)\right)\right| \\
\qquad{} \le C\big( M(|f(1)|+|f(-1)|)+N(|f'(1)|+|f'(-1)|)\big),
\end{gather*}
and \eqref{ec:prop2-1} is proved.

From the identity
\begin{gather*}
	(G_nf)'(1)=\int_{-1}^{1}f(y)\frac{\partial L_n}{\partial x}(1,y) {\rm d}\mu_\alpha(y)
	+M\left(f(1)\frac{\partial L_n}{\partial x}(1,1)+f(-1)\frac{\partial L_n}{\partial x}(1,-1)\right)\\
\hphantom{(G_nf)'(1)=}{} +N\left(f'(1)\frac{\partial^2 L_n}{\partial x\partial y}(1,1)+f'(-1)\frac{\partial^2 L_n}{\partial x\partial y}(1,-1)\right),
\end{gather*}
and the estimates for $N>0$, deduced from Lemmas \ref{acotacionQ} and \ref{extremos},
\begin{gather*}
\left|\frac{\partial L_n}{\partial x}(1,y)\right|\le C\big(1-y^2\big)^{-\frac{\alpha}{2}-\frac{1}{4}}, \qquad y\in [-1,1],\\
\left|\frac{\partial L_n}{\partial x}(1,1)\right|\le C,\qquad \left|\frac{\partial L_n}{\partial x}(1,-1)\right|\le C,
\end{gather*}
and
\begin{gather*}
\left|\frac{\partial^2 L_n}{\partial x\partial y}(1,1)\right|\le C,\qquad \left|\frac{\partial^2 L_n}{\partial x\partial y}(1,-1)\right|\le C,
\end{gather*}
the proof of \eqref{ec:prop2-2} is obtained in the same way as \eqref{ec:prop2-1}.
\end{proof}

\section{Proof of Theorem \ref{density}}\label{section4}
\begin{proof}
	Let $f\in W_p^{\alpha}$ and $\varepsilon>0$. From \cite[Theorem~4.1]{rodriguez}, we have that the space $C_c^{\infty}([-1,1])$ is dense in $L^p({\rm d}\mu_{\alpha})$. Then, there exists a function $g\in C_c^{\infty}([-1,1])$ such that
\begin{gather*}
	\|f-g\|_{L_p({\rm d}\mu_{\alpha})}<\frac{\tilde{\varepsilon}}{4},
\end{gather*}
with $\tilde{\varepsilon}=\varepsilon/(1+M+N)$. We take now a function $h\in C_c^{\infty}([-1,1])$ that satisfies
\begin{gather*}
	\|h\|_{L_p({\rm d}\mu_{\alpha})}<\frac{\tilde{\varepsilon}}{4},
\end{gather*}
and
\begin{gather*}
	h(1)=f(1)-g(1),\qquad h(-1)=f(-1)-g(-1),
\\
	h'(1)=f'(1)-g'(1),\qquad h'(-1)=f'(-1)-g'(-1).
\end{gather*}
Then
\begin{gather*}
\|f-(g+h)\|^p_{W_p^{\alpha}} =\|f-(g+h)\|^p_{L^p({\rm d}\mu_{\alpha})}\\
\hphantom{\|f-(g+h)\|^p_{W_p^{\alpha}} =}{}
 +M\big(|f(1)-(g+h)(1)|^p+|f(-1)-(g+h)(-1)|^p\big)\\
\hphantom{\|f-(g+h)\|^p_{W_p^{\alpha}} =}{} +N\big(|f'(1)-(g+h)'(1)|^p+|f'(-1)-(g+h)'(-1)|^p\big)
\end{gather*}
and
\begin{gather*}
\|f-(g+h)\|_{W_p^{\alpha}}\le \|f-g\|_{L^p({\rm d}\mu_{\alpha})}+\|h\|_{L^p({\rm d}\mu_{\alpha})}< \frac{\tilde{\varepsilon}}{2}.
\end{gather*}
On the other hand, given $\tilde{\varepsilon}$ there exists a polynomial $q_n$ of degree $n$ such that
\begin{gather*}
	\|g+h-q_n\|_{\infty}<\frac{\tilde{\varepsilon}}{2},\qquad \|(g+h)'-q_n'\|_{\infty}<\frac{\tilde{\varepsilon}}{2}.
\end{gather*}
Then, $\|f-q_n\|_{W_p}^p <\varepsilon$ and the proof is completed.
\end{proof}

\subsection*{Acknowledgements} The authors are highly indebted to professor J.M.~Rodr\'{\i}guez for his helpful comments about the proof of Theorem~\ref{density}. The authors were supported by grant MTM2015-65888-C04-4-P from Spanish Government.

\pdfbookmark[1]{References}{ref}
\LastPageEnding

\end{document}